\newtheorem{theorem}{Theorem}[section]
\newtheorem{proposition}[theorem]{Proposition}
\theoremstyle{definition}
\newtheorem{definition}[theorem]{Definition}
\theoremstyle{remark}
\numberwithin{equation}{section}
\newcommand{\CC}{\mathbb C}
\newcommand{\RR}{\mathbb R}
\newcommand{\dist}{\text{\rm dist }}
\newcommand{\fddtheta}[1]{\frac{\partial #1}{\partial \theta}}
\def\NN{{\mathbb N}}
\def\PP{{\mathbb P}}
\def\cC{{\mathcal C}}
\def\cF{{\mathcal F}}
\def\cL{{\mathcal L}}
\def\dist{\textrm{dist}\,}
\begin{document}

\title[Brody curves in complicated sets]{Brody curves in complicated sets}%

\author{Taeyong Ahn}%
\address{(Ahn) Center for Geometry and its Applications, 
POSTECH, Pohang 790-784 The Republic of Korea}%
%
%\address{() } 
\email{(Ahn) triumph@postech.ac.kr}
\date{\today}
%\email{() }%
\thanks{Research of the author is 
supported in part by the grant 2011-0030044 (The SRC-GAIA)
of the NRF of Korea.}
\subjclass[2010]{37F10, 37F75, 32A19}
\keywords{hyperbolic invariant set, stable manifold, generalized H\'enon mapping, Brody curve}%

\begin{abstract}
For a hyperbolic generalized H\'enon mapping (in the sense of \cite{BS1991}), $J^+$, the boundary of the set of non-escaping points, is known as a complicated set and also known to admit a foliation by biholomorphic images of $\CC$ (see \cite{BS1991}, \cite{FS92}). We prove the existence of a leaf, which is injective Brody in $\PP^2$, in the foliation of $J^+$ for certain H\'enon mappings (for the definition of injective Brodyness, see Section \ref{sec:Brody}).
\end{abstract}
\maketitle

\section{Introduction}
The Brody curve first appeared in Brody's proof in \cite{Brody} that every compact non-Kobayashi hyperbolic manifold contains a non-trivial holomorphic image of $\CC$. The non-trivial entire curve is called a Brody curve. (See Section \ref{sec:Brody}.)

Also, in \cite{Gromov}, Gromov considered infinite dimensional geometry and introduced the concept of mean dimension, a topological invariant. As a main example, he considered the space of Brody curves. Recently, the space of Brody curves has been much studied. In particular, for the projective spaces, see \cite{EremenkoArxiv}, \cite{Eremenko2010}, \cite{MatsuoTsukamoto}, \cite{Tsukamoto2008}, \cite{Tsukamoto2009-1}, \cite{Tsukamoto2009} and \cite{Tsukamoto2012}. However, it is not easy to find interesting examples (cf. \cite{Ahn12}, \cite{Ahn14-2}) except trivial ones such as polynomial mappings of $\CC$ to $\PP^2$ and something of that type. From the perspective of geometry, it is interesting to see non-trivial examples.

In this paper, we construct an interesting example of Brody curves using the dynamics of a certain generalized H\'enon mapping.

A generalized H\'enon mapping $f$ is defined simply by a polynomial diffeomorphism of $\CC^2$ of the form $f(z, w)=(p(z)-aw, z)$ where $p$ is a monic polynomial of one complex variable and $a$ is a non-zero constant. The polynomial diffeomorphisms of this class are particularly important since in \cite{FM89}, Friedland and Milnor classified polynomial diffeomorphisms of $\CC^2$ and showed that the only dynamically interesting polynomial diffeomorphisms are the finite compositions of generalized H\'enon mappings. So, many mathematicians have studied them (listing some, cf \cite{BS1991}, \cite{BS1993}, \cite{FS92}, \cite{HO}).

Based on the dynamics of $f$, $\CC^2$ can be divided into two regions. Let
$$
K^+:=\{(z, w)\in\CC^2\colon \exists\, c>0 \textrm{ such that } \|f^n(z, w)\|<c,\,\, \forall\, n\in\NN\}.
$$
Also, let $J^+:=\partial K^+$ and $U^+:=\CC^2\setminus K^+$. Let $g^+:\CC^2\to\RR$ denote the Green function associated to $f$. Then $U^+=\{g^+>0\}$ and $K^+=\{g^+=0\}$.

In \cite{Ahn12} and \cite{Ahn14-2}, the author considered a foliation structure for $U^+$. More precisely, it was proved that the level set $\cL_c:=\{g^+=c\}$ with $c>0$ is foliated by biholomorphic images of $\CC$ and each leaf is dense in $\cL_c$ in \cite{HO}. In \cite{Ahn12} and \cite{Ahn14-2}, the author further proved that every leaf is an injective Brody curve of $\PP^2$ with respect to the Fubini-Study metric. (For definitions, see Section \ref{sec:Brody}.)

In this paper, we consider a similar structural property for $J^+$. In \cite{BS1991}, \cite{BS1993} and \cite{FS92}, the foliation structure of $J^+$ was studied. In particualr, in \cite{BS1991}, Bedford and Smillie proved that $J^+$ admits a foliation $\cF^+$ by biholomorphic images of $\CC$ for a hyperbolic generalized H\'enon mapping $f$. Here, the main theorem of this paper is as follows:
\begin{theorem}\label{thm:mainBrodyJ+}
Let $f(z, w)=(p(z)-aw, z)$ where $p$ is a monic polynomial of one complex variable and $a$ is a non-zero constant. Assume that $f$ is hyperbolic (in the sense of \cite{BS1991}) and $|a|\leq 1$. Then, in the natural foliation $\cF^+$ of $J^+$, there exists a leaf which is an injective Brody curve of $\PP^2$ with respect to the Fubini-Study metric.
\end{theorem}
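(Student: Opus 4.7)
The plan is to realize a Brody leaf of $\cF^+$ as a normal limit of the injective Brody parametrizations of leaves of $\cL_{c_n}$ as $c_n\to 0^+$, exploiting the earlier result of the author \cite{Ahn12, Ahn14-2} that every leaf of the foliation of $\cL_c$ (for $c>0$) is an injective Brody curve in $\PP^2$. The hypothesis $|a|\leq 1$ enters through the Jacobian bound $\det df\equiv a$, which is what allows the Brody derivative estimates of \cite{Ahn12, Ahn14-2} to be made uniform as $c\to 0^+$ and hence the family of parametrizations to remain equicontinuous up to $J^+$.

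Concretely, pick $c_n\searrow 0$, a leaf $\Lambda_n$ of $\cL_{c_n}$, and a biholomorphism $\phi_n\colon\CC\to\Lambda_n$. By \cite{Ahn12, Ahn14-2} each $\phi_n$ is an injective Brody map into $\PP^2$. Reparametrizing by an affine automorphism of $\CC$, I may arrange $\|\phi_n'(0)\|_{FS}=1$ and $\|\phi_n'(z)\|_{FS}\leq 1$ for every $z\in\CC$. By Montel on $\PP^2$, a subsequence converges locally uniformly to a holomorphic map $\phi_\infty\colon\CC\to\PP^2$ with $\|\phi_\infty'\|_{FS}\leq 1$ and $\|\phi_\infty'(0)\|_{FS}=1$, so $\phi_\infty$ is a non-constant Brody curve. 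Continuity of $g^+$ together with $g^+\circ\phi_n\equiv c_n\to 0$ forces $\phi_\infty(\CC)\subset\{g^+=0\}=K^+$, and since $\phi_n(\CC)\subset U^+$ for each $n$, actually $\phi_\infty(\CC)\subset\partial K^+ = J^+$.

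To confirm that $\phi_\infty(\CC)$ lies inside a single leaf of $\cF^+$, I invoke hyperbolicity: by \cite{BS1991}, the tangent distributions of the level-set foliations $\cL_c$ converge, as $c\to 0^+$, to the tangent distribution of $\cF^+$ along $J^+$. Hence $\phi_\infty'(z)$ is tangent to $\cF^+$ at every point of the image, and the uniqueness of holomorphic integral curves of the (smooth, along the transversals) foliation forces $\phi_\infty(\CC)$ to lie in a single leaf $L_\infty$ of $\cF^+$.

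The main obstacle is global injectivity of $\phi_\infty$. Locally, in a flow-box chart for $\cF^+$ in which the nearby leaves of $\cL_{c_n}$ are graphs converging to the limit plaque, an application of Hurwitz's theorem to the leaf coordinate of $\phi_n$ shows that $\phi_\infty$ is locally injective. For the global statement, since $L_\infty\cong\CC$, I uniformize $L_\infty$ and express $\phi_\infty$ in these coordinates as an entire self-map of $\CC$; the same procedure applied to the $\Lambda_n$ produces affine self-maps of $\CC$ (as each $\phi_n$ is itself a uniformizing biholomorphism onto $\Lambda_n$). The continuous dependence of the uniformizations on the leaf, guaranteed by hyperbolicity of $f$ together with the dissipativity $|a|\leq 1$, then forces $\phi_\infty$ in uniformization coordinates to be affine, hence injective, completing the proof of Theorem \ref{thm:mainBrodyJ+}.
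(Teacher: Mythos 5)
Your strategy --- degenerating the injective Brody leaves of $\cL_{c_n}\subset U^+$ onto $J^+$ as $c_n\to 0^+$ --- is genuinely different from the paper's, which never leaves $J^+$: there one fixes a periodic point $P\in J$, blows up an analytic disc in the stable leaf through $P$ by iterating $f^{-N}$, applies the Brody reparametrization lemma, and controls the limit using flow-boxes of the lamination $\cF^+$ together with Theorem \ref{thm:NonexistenceofHoloCurve} and Theorem 5.9 of \cite{BS1991}. Unfortunately your route rests on a claim that is not in \cite{BS1991} and is not obviously true: that the tangent distributions of the foliations of the level sets $\cL_c$ converge, as $c\to 0^+$, to the tangent distribution $E^s$ of the stable lamination $\cF^+$ along $J^+$. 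This is essentially the assertion that the Forn{\ae}ss--Sibony foliation of $U^+$ extends continuously to the Bedford--Smillie lamination of $J^+$ (absence of tangencies/branching of $g^+$ near $J^+$), which is a substantial statement in its own right and is not available at the generality of Theorem \ref{thm:mainBrodyJ+}. Without it, the limit curve $\phi_\infty(\CC)$ need not lie in a single leaf of $\cF^+$ --- it need not even be tangent to $\cF^+$ --- and the uniqueness-of-integral-curves argument has nothing to act on. The same unproven continuity underlies your ``continuous dependence of the uniformizations on the leaf'' across $c\to 0$, so the injectivity step inherits the gap (and even granting convergence of the uniformizations, you would still need to rule out that the affine maps $h_n$ with $\phi_n=u_n\circ h_n$ degenerate to a constant, which requires tying the normalization $\|\phi_n'(0)\|_{FS}=1$ back to the uniformizing coordinates).

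Two smaller points. First, the $g^+$-continuity argument only controls the part of $\phi_\infty(\CC)$ lying in $\CC^2$; a normal limit in $\PP^2$ can a priori pick up points on the line at infinity, and one must invoke $\overline{K^+}=K^+\cup I_+$ together with Theorem \ref{thm:NonexistenceofHoloCurve}, as the paper does, to exclude them. Second, the hypothesis $|a|\leq 1$ is not doing the work you assign to it: your affine renormalization already gives $\sup_{\CC}\|\phi_n'\|_{FS}\leq 1$ with $\|\phi_n'(0)\|_{FS}=1$ irrespective of any uniformity in the Brody constants of \cite{Ahn12}, \cite{Ahn14-2}. Where $|a|\leq 1$ is genuinely needed is via Theorem 5.9 of \cite{BS1991}, to know that every leaf of $\cF^+$ is a stable manifold of a point of $J$; this is what allows one to push a fixed disc of the limit leaf forward by a high iterate of $f$ into a single flow-box and run the Hurwitz argument for injectivity. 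Your proposal has no substitute for this step.
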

This is interesting in the sense that if we restrict Theorem \ref{thm:mainBrodyJ+} to the H\'enon mappings in \cite{FS92}, then $J^+$ has fractional Hausdorff dimension and we can have the density property: the closure of the injective Brody curve leaf is equal to $J^+$. In general, these properties may not be true. On the other hand, in the sense that the Fubini-Study metric of a Brody curve is bounded, Theorem \ref{thm:mainBrodyJ+} can be considered as having tame behavior. So, the curve in Theorem \ref{thm:mainBrodyJ+} is a not-too-complicated curve in a complicated set. From these perspectives, this curve is another non-trivial example of injective Brody curves in $\PP^2$.

%The method in the case of $U^+$ is modifying the Brody reparametrization lemma in \cite{Brody} by giving a fixed point to the family of reparametrized analytic discs, analysing the behavior of $\cL_c$ near infinity and using the verticalness of leaves of the natural foliation of $\cL_c$ near a certain point at infinity. The modification of the lemma with these allows us to locate the limit curve after the normal family argument.

%However, we cannot use this strategy to the case of $J^+$. 
Note that different from the case of \cite{Ahn12} and \cite{Ahn14-2}, 
due to the recurrent behavior in $J^+$, 
%Due to the recurrent behavior in $J^+$, it seems not possible to give a fixed point to the family of reparametrized analytic discs and 
it is not expected that we can locate the final injective Brody curve. We just know the existence of an injective Brody curve leaf in $J^+$.

The main ingredients for Theorem \ref{thm:mainBrodyJ+} are the hyperbolicity of $f$ (in the sense of \cite{BS1991}) and flow-boxes of the foliation of $J^+$ and they are quite different from those for \cite{Ahn12} and \cite{Ahn14-2}. 
%From the hyperbolicity, we construct a family of analytic discs, the derivatives of which at the origin blow off to infinity. After reparametrization, using the local uniform convergence and the flow-boxes, we prove that the limit map lies in a single leaf and is injective and conclude Theorem \ref{thm:mainBrodyJ+} from the uniformization theorem of one complex variable.
\\

\bf
\noindent
Notation.
\rm
We use $\Delta(a, r)$ for the disc in $\CC$ centered at $a\in\CC$ and of radius $r>0$ and $\Delta$ for the standard unit disc in $\CC$.
%The polydisc centered at $a\in\CC^2$ and of radius $r>0$ in $\CC^2$ is denoted by $\Delta^2(a, r)$.
We denote by $\|\cdot\|$ the standard Euclidean metric of $\CC^2$ and by $ds(P, V)$ the standard Fubini-Study metric on $\PP^2$ of $V\in T_P\PP^2$ at $P\in \PP^2$. For a holomorphic curve $\gamma:U\to\PP^2$ and for $\theta'\in U$, $\|\gamma(\theta)\|_{FS, \theta'}$ denotes $ds(\gamma(\theta'), d\gamma|_{\theta=\theta'}(\frac{d}{d\theta}))$ where $U$ is an open subset of $\CC$.

%We define
%$$1
%\begin{array}{ll}
%K^\pm &:= \{(z, w)\in\CC^2\colon \|g^{\pm n}\|<C, \textrm{ for all }n\in\NN \textrm{ and for some }C>0 \},\\
%J^\pm&:=\partial K^\pm,\\
%U^\pm&:=\CC-K^\pm,\\
%K&:=K^+\cap K^- \textrm{ and }\\
%J&:=J^+\cap J^-.\\
%\end{array}
%$$

\section{Preliminaries}\label{sec:prelim}
In this section, we recall some basic properties about generalized H\'enon mappings. A generalized H\'{e}non mapping is a holomorphic polynomial automorphism $f:\CC^2\to\CC^2$ defined by

$$
f(z, w)=(p(z)-aw, z)
$$
where $p(z)$ is a monic polynomial of one complex variable $z$ with degree $d\ge 2$ and $a\neq 0$. Then, $f^{-1}(z, w)=(w, (p(w)-z)/a)$.
\medskip

Let $\PP^2$ be the $2$-dimensional complex projective space and
$$
I_+:=[0:1:0]\quad\textrm{ and }\quad I_-:=[1:0:0]
$$
in the homogeneous coordinate system of $\PP^2$. Then, $f$ has the natural extension to $\widetilde f\colon\PP^2\setminus\{I_+\}\to\PP^2\setminus\{I_+\}$ by
$$
\widetilde f([z:w:t])=\left[t^dp(\frac{z}{t})-awt^{d-1}: zt^{d-1}: t^d\right].
$$
Similarly, $f^{-1}$ also has the natural extension to $\widetilde{f^{-1}}:\PP^2\setminus\{I_-\}\to\PP^2\setminus\{I_-\}$ by
$$
\widetilde{f^{-1}}([z:w:t])=\left[wt^{d-1}: \frac{1}{a}(t^dp(\frac{w}{t})-zt^{d-1}): t^d\right].
$$

We recall the following notions and properties related to the dynamics of $f$ as in \cite{HO}. Let
$$
K^\pm=\{p\in\CC^2\colon \{f^{\pm n}(p)\}\textrm{ is a bounded sequence of }n\}.
$$
Let $J^\pm=\partial K^\pm$, $K=K^+\cap K^-$, $J=J^+\cap J^-$ and $U^\pm=\CC^2\setminus K^\pm$. Then, it is known that $K^\pm$ is closed and $U^\pm$ is open in $\CC^2$.

%We define the Green function on $\CC^2$ for $f$ and $f^{-1}$ by
%$$
%g^+(z, w):=\lim_{n\to\infty}\frac{1}{d^n}\log^+\|g^n(z, w)\|
%$$
%and
%$$
%g^-(z, w):=\lim_{n\to\infty}\frac{1}{d^n}\log^+\|g^{-n}(z, w)\|,
%$$
%respectively where $\log^+:=\max\{0, \log\}$. Then, $g^\pm$ are positive and continuous on $\CC^2$, plurisubharmonic on $\CC^2$, pluriharmonic on $U^\pm$, and
%$$
%g^+\circ f=dg^+\quad\textrm{ and }\quad g^-\circ f^{-1}=dg^-.
%$$
%
%Also, it is well-known that $K^\pm=\{g^\pm=0\}$. 

\begin{proposition}[See ~\cite{Sibony99}]\label{prop:I+I-} $K^\pm$, $U^\pm$, $I_\pm$, and $\widetilde{f}$ satisfy the following properties:
\begin{enumerate}
	\item $I_-$ and $I_+$ are the super-attracting fixed points of $\widetilde{f}$ and $\widetilde{f^{-1}}$, respectively,
	\item any compact subset of $U^\pm$ uniformly converges to $I_\mp$, respectively,
	\item $\widetilde{f}(\{t=0\}\setminus I_+)=I_-$ and $\widetilde{f^{-1}}(\{t=0\}\setminus I_-)=I_+$, and
	\item $\overline{K^+}=K^+\cup I_+$ and $\overline{K^-}=K^-\cup I_-$.
\end{enumerate}
\end{proposition}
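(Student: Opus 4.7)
The four items will follow from a direct analysis of the explicit formula for $\widetilde f$, together with the standard Hénon filtration. I would begin with (3): since $p$ is monic of degree $d$, the expansion
\[
t^d p(z/t) = z^d + c_{d-1} z^{d-1} t + \dots + c_0 t^d
\]
shows that the defining triple for $\widetilde f$ is polynomial in $(z,w,t)$ and extends across $\{t=0\}$. Setting $t=0$ gives $\widetilde f([z:w:0]) = [z^d:0:0] = [1:0:0] = I_-$ whenever $z\neq 0$, that is, whenever $[z:w:0]\neq I_+$. The same computation applied to $\widetilde{f^{-1}}$ gives the second half of (3), and in particular $\widetilde f(I_-) = I_-$ and $\widetilde{f^{-1}}(I_+) = I_+$.

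To finish (1) I would check super-attractivity. Working in the affine chart $(u,v)=(w/z,t/z)$ centered at $I_-$, I would rewrite the map in the normalized form
\[
(u,v)\longmapsto\Bigl(\tfrac{v^{d-1}}{Q(u,v)},\;\tfrac{v^d}{Q(u,v)}\Bigr),\qquad Q(u,v)=1+O(u,v),
\]
so both components vanish to order $\geq d-1\geq 1$ at the origin; the differential at $I_-$ is then zero, which is precisely super-attractivity. The symmetric chart $(z/w,t/w)$ around $I_+$ handles $\widetilde{f^{-1}}$.

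For (2), I would invoke the standard Hénon filtration: choose $R>0$ so that with $V=\{|z|,|w|\leq R\}$, $V^+=\{|z|\geq\max(R,|w|)\}$, and $V^-=\{|w|\geq\max(R,|z|)\}$, one has $f(V^+)\subset V^+$ and $V^+\cap K^+=\emptyset$, with $|z_n|$ growing like $|z_0|^{d^n}$ on orbits in $V^+$. A compact $L\subset U^+$ then has a uniform escape time into $V^+$, and once inside $V^+$ the recursion $w_{n+1}=z_n$, $z_{n+1}\sim z_n^d$ forces $w_{n+1}/z_{n+1}\to 0$ and $1/z_{n+1}\to 0$ uniformly, i.e.\ $[z_n:w_n:1]\to I_-$ uniformly on $L$. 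The argument for $U^-$ is dual.

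The principal obstacle lies in (4). I would prove the inclusion $\overline{K^+}\subset K^+\cup\{I_+\}$ by contradiction: if $p\in\overline{K^+}\cap(\{t=0\}\setminus\{I_+\})$, then $\widetilde f$ is continuous at $p$ and by (3) sends $p$ to $I_-$; continuity and the $f$-invariance of $K^+$ then force $I_-\in\overline{K^+}$. But any sequence $(z_n,w_n)\in K^+$ with $[z_n:w_n:1]\to I_-$ would satisfy $|z_n|\to\infty$ and $|w_n|/|z_n|\to 0$, placing it in $V^+$ for large $n$, contradicting $V^+\cap K^+=\emptyset$. Hence the only possible accumulation point of $K^+$ on $\{t=0\}$ is $I_+$. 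The reverse inclusion $I_+\in\overline{K^+}$ will follow from $K^+$ being unbounded (for instance because $J^+\subset K^+$ contains the stable manifolds of saddle periodic points) combined with the dichotomy just established, which forces any unbounded sequence in $K^+$ to converge to $I_+$ in $\PP^2$.
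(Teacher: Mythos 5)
The paper does not actually prove this proposition; it is quoted from Sibony's survey \cite{Sibony99}. So your argument can only be measured against the standard filtration proof, and that is essentially the route you take. Items (2), (3) and (4) are handled correctly: the computation at $t=0$, the escape argument via the filtration $V$, $V^{\pm}$ with a uniform escape time on compacts, and the two inclusions of (4) (ruling out accumulation of $K^+$ on $\{t=0\}\setminus\{I_+\}$ by pushing a hypothetical accumulation point forward to $I_-$ and contradicting $V^+\cap K^+=\emptyset$, then deducing $I_+\in\overline{K^+}$ from the unboundedness of $K^+$) are all sound, modulo the standard facts $f(V^+)\subset V^+$, $V^+\cap K^+=\emptyset$ and the closedness of $K^+$ in $\CC^2$, which you are entitled to quote.

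The one genuine flaw is in (1). From $u'=v^{d-1}/Q$, $v'=v^d/Q$ you conclude that both components ``vanish to order $\geq d-1\geq 1$'' and hence that ``the differential at $I_-$ is zero.'' Vanishing to order $\geq 1$ never forces a zero differential (every map fixing the origin does that much); you need order $\geq 2$. For $d\geq 3$ your computation does give $D\widetilde f(I_-)=0$, but for $d=2$ the first component is $u'=v/Q=v+O(2)$, so $D\widetilde f(I_-)=\left(\begin{smallmatrix}0&1\\0&0\end{smallmatrix}\right)\neq 0$: the differential is nilpotent, not zero. The conclusion survives under the usual convention that ``super-attracting'' means all eigenvalues of the differential vanish --- equivalently, here, $D(\widetilde f^{\,2})(I_-)=0$, or directly the superexponential contraction coming from $v'\sim v^{d}$ and $u'\sim v^{d-1}$ --- but as written your justification is incorrect precisely in the quadratic case $d=2$, which is the most classical one. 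A smaller quibble on (4): you justify the unboundedness of $K^+$ via stable manifolds of saddle periodic points, which is fine under the hyperbolicity hypothesis of the paper's main theorem but presupposes the existence of saddles for the proposition as stated; a cleaner general reason is that $J^+$ meets every line $\{w=c\}$ because the restriction of $g^+$ to such a line is subharmonic with logarithmic growth and therefore cannot be harmonic everywhere.
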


%The following filtration is also well-known. For sufficiently large $R>0$, we have that $|z|>R$ implies that either $|p(z)-aw|>|z|$ or $|w|<|z|$, or both. For such $R$ let
%\begin{eqnarray*}
%&& V^+=\{(z, w)\in\CC^2\colon R<|z|, |w|\leq|z|\},\\
%&& V^-=\{(z, w)\in\CC^2\colon R<|w|, |z|\leq|w|\},\textrm{ and }\\
%&& W=\{(z, w)\in\CC^2\colon |z|, |w|\leq R\}.
%\end{eqnarray*}
%Then, we have
%\begin{proposition}\label{prop:filtration}\hspace{.1in}
%\begin{enumerate}
%	\item $f(V^+)\subseteq V^+$ and $f^{-1}(V^-)\subseteq V^-$.
%	
%	\item $U^+=\cup_{i=0}^\infty f^{-i}(V^+)$ and $U^-=\cup_{i=0}^\infty f^i(V^-)$.
%\end{enumerate}
%\end{proposition}

The following theorem describes the behavior of $J^+$.
\begin{theorem}[Theorem 1.3 in \cite{Ahn14-2}]\label{thm:NonexistenceofHoloCurve}
There is no non-trivial holomorphic curve, which passes through $I_+$, and is supported in $\overline{K^+}\subseteq\PP^2$.
\end{theorem}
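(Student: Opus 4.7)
My approach is by contradiction, composing with $\widetilde f$ and exploiting the escape filtration near $I_+$. Suppose $\gamma\colon \Delta \to \overline{K^+}$ is a non-trivial holomorphic curve with $\gamma(0) = I_+$; after reparametrizing and shrinking, $\gamma(\Delta \setminus \{0\}) \subseteq K^+$ by Proposition~\ref{prop:I+I-}(4).

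Working in the affine chart $(u, v) = (z/w, t/w)$ around $I_+$, write $\gamma(\zeta) = (u(\zeta), v(\zeta))$ with $j = \textrm{ord}_0 u$ and $k = \textrm{ord}_0 v$. The first step is to extract a local constraint on $K^+$: the standard H\'enon filtration shows that if $|w|$ is large and $|z|^d$ is much smaller than $|aw|$, then $\|f^n(z, w)\|$ grows doubly exponentially, so there is a neighborhood of $I_+$ in which $K^+$ lies inside $\{|u|^d \geq c\,|v|^{d-1}\}$ for some $c > 0$. Comparing vanishing orders, this forces the inequality $dj \leq (d-1)k$ (in particular, neither $u$ nor $v$ is identically zero).

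Next, the homogeneous formula $\widetilde f(u, v) = [\,v^d p(u/v) - a\,v^{d-1} : u v^{d-1} : v^d\,]$ together with a direct series expansion shows that $\widetilde f \circ \gamma$ extends holomorphically from $\Delta \setminus \{0\}$ to $\Delta$, with $\widetilde f \circ \gamma(0)$ necessarily lying on the line at infinity $\{t = 0\}$. Since $\overline{K^+} \cap \{t = 0\} = \{I_+\}$ by Proposition~\ref{prop:I+I-}(4) and the image remains in $\overline{K^+}$ by continuity, either $\widetilde f \circ \gamma(0) = I_+$ or we already have a contradiction. The same expansion moreover shows that in the strictly sub-critical regime $dj < (d-1)k$, and in the critical regime $dj = (d-1)k$ whenever the leading-coefficient identity $\alpha_0^d = a\,\beta_0^{d-1}$ fails (with $\alpha_0, \beta_0$ the leading Taylor coefficients of $u, v$), the limit is $I_- \neq I_+$, ending the argument outright.

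The remaining case is critical plus the leading-coefficient identity, in which integrality forces $(j, k) = ((d-1)m, d\,m)$ for a positive integer $m$. A more detailed expansion tracking the order $M$ of the vanishing of $v^d p(u/v) - a\,v^{d-1}$ along $\gamma$ shows that the only way for $\widetilde f \circ \gamma(0)$ to again equal $I_+$ is $M = (d+1)j/d$, and in that case the new curve is again critical with vanishing orders $(j', k') = (j/d, k/d)$, i.e., the critical parameter decreases from $m$ to $m/d$. Iterating produces parameters $m, m/d, m/d^2, \ldots$ which cannot all be positive integers; at the first failing step the new limit either falls off $\overline{K^+}$ or violates the order inequality, yielding the final contradiction. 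The main obstacle is verifying this rigid division-by-$d$ in every step where $\widetilde f \circ \gamma$ continues to pass through $I_+$, which requires careful book-keeping of the cancellations in the series for $\widetilde f \circ \gamma$ forced by the degenerate coefficient identity; away from this case everything reduces to a routine order computation.
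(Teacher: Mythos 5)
The paper does not prove this statement: it is imported verbatim as Theorem 1.3 of \cite{Ahn14-2}, so there is no in-paper argument to compare yours against, only the external citation. Taken on its own terms, your local order-counting argument at $I_+$ is structurally sound and, as far as I can check, complete. The filtration estimate is correct (if $|w|$ is large and $|z|^d\ll|w|$, then $f(z,w)$ lands in the forward-invariant escaping region $\{|z|\geq\max(|w|,R)\}$, so $K^+$ near $I_+$ lies in $\{|u|^d\geq c|v|^{d-1}\}$), the inequality $dj\leq(d-1)k$ follows, and comparing $\mathrm{ord}(A)$ with $\mathrm{ord}(B)=j+(d-1)k<\mathrm{ord}(C)=dk$ correctly shows the extended limit always lies on $\{t=0\}$ and equals $I_-$ (or a third point at infinity, equally fatal) unless the critical cancellation occurs. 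Two bookkeeping slips should be repaired: (i) the survival condition is $M>j+(d-1)k$, not $M=(d+1)j/d$ (you dropped the $(d-1)k$ term, and no exact value of $M$ is forced); (ii) the new curve need not again be critical with $j'=j/d$ --- what is actually forced in a surviving step is only $k'=\mathrm{ord}(C)-\mathrm{ord}(B)=k-j=k/d$, while $j'=M-j-(d-1)k$ is a priori any integer in $[1,(d-1)k'/d]$, with strict subcriticality simply producing $I_-$ one step later. Fortunately the descent you need is exactly $k_{n+1}=k_n/d$ in every surviving step, so $k_n=k_0/d^n$ cannot remain a positive integer and the contradiction stands. You should also say explicitly why $\widetilde f\circ\gamma$ extends holomorphically across $0$ (the components $A,B,C$ are holomorphic on all of $\Delta$ and $C=v^d\not\equiv0$, so one divides by the common vanishing order) and why each iterate is non-constant with $u_n,v_n\not\equiv0$ (injectivity of $f$ on $\CC^2$, and the fact that $\{t=0\}\cap\overline{K^+}=\{I_+\}$ rules out $v_n\equiv0$); these are easy but are needed to run the induction.
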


We recall hyperbolicity for generalized H\'enon mappings in \cite{BS1991} (see \cite{Shub} and also \cite{FS92}). If a generalized H\'enon mapping $f$ is hyperbolic, there are continuous subbundles $E_u$ and $E_s$ such that $T\CC^2_J=E^s\oplus E^u$, and $Df(E^s)=E^s$ and $Df(E^u)=E^u$, and there exists constants $c>0$ and $0<\lambda<1$ such that
\begin{eqnarray*}
\|Df^n|_{E^s}\|<c\lambda^n, && n\geq 0\quad\textrm{ and}\\
\|Df^{-n}|_{E^u}\|<c\lambda^n, && n\geq 0.
\end{eqnarray*}

The Stable Manifold Theorem and Theorem 5.4 in \cite{BS1991} imply that for every $x\in J$, there exists a leaf $\cL_x$ in $\cF^+$ such that $x\in\cL_x$ and $T_x\cL_x=E^s_x$ where $\cF^+$ is the natural foliation of $J^+$ in \cite{BS1991}.
\medskip

\section{Brody Curves}\label{sec:Brody}
In this section, we briefly introduce the concepts of the \emph{Brody curve} and the \emph{injective Brody curve}.

\begin{definition}[Brody Curve]
Let $M$ be a compact complex manifold with a smooth metric $ds_M$. Let $\psi:\CC\to M$ be a non-constant holomorphic map.

The map $\psi$ is said to be \emph{Brody} if $\sup_{\theta\in\CC}ds_M(\psi(\theta), d\psi(\fddtheta{} ))<C_M$ for some constant $C_M>0$. We call the image $\psi(\CC)$ a \emph{Brody curve} in $M$. The curve $\psi(\CC)$ is said to be \emph{injective Brody} if the parametrization $\psi$ is injective.
\end{definition}

In the rest of the paper, we only consider the Brody curves in $\PP^2$ with respect to the standard Fubini-Study metric of $\PP^2$. %Then, it might be a natural question to ask how big the collection of Brody curves is. Indeed, there are plenty of Brody curves. In the following proposition, $ds(f, f')$ denotes the Fubini-Study metric of $f'$ in $\PP^2$.
\medskip

Below, we consider some trivial examples. The proofs are all straightforward from computations and so, we omit them.
\begin{proposition}
Let $\alpha$ be a complex constant and $p, q$ polynomials of one complex variable $z$. Then, all curves of the form $[z: p(z): 1]$ and of the form $[p(z)\exp(z): q(z)\exp(\alpha z):1]$ are Brody.
\end{proposition}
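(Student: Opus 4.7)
The plan is to verify the Brody condition directly from the coordinate formula for the pullback of the Fubini--Study metric. The first step is to recall that for a holomorphic curve $\gamma = [f_0: f_1: f_2]\colon \CC \to \PP^2$, the Lagrange identity applied to $\partial\bar\partial\log(|f_0|^2 + |f_1|^2 + |f_2|^2)$ yields, up to a universal constant,
\begin{equation*}
\|\gamma'(z)\|_{FS}^2 \;=\; \frac{\sum_{0 \le i < j \le 2}\bigl|f_i'(z) f_j(z) - f_i(z) f_j'(z)\bigr|^2}{\bigl(|f_0(z)|^2 + |f_1(z)|^2 + |f_2(z)|^2\bigr)^2}.
\end{equation*}
Since this is continuous on $\CC$, the Brody condition reduces to the asymptotic statement that the right-hand side remains bounded as $|z| \to \infty$.

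For the first family $\gamma(z) = [z: p(z): 1]$, I would compute the three minors as $p - zp'$, $1$, and $p'$. The numerator is then a polynomial in $|z|$ of degree at most $2\deg p$, while the denominator $(|z|^2 + |p|^2 + 1)^2$ has degree $4\deg p$ when $\deg p \ge 1$ (and degree $4$ when $\deg p = 0$); in every case the ratio vanishes at infinity, so the function is globally bounded by continuity.

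For the second family $\gamma = [p(z)e^z: q(z)e^{\alpha z}: 1]$, a direct computation produces the three minors
\begin{align*}
M_{01} &= \bigl[(1-\alpha)pq + p'q - pq'\bigr]\,e^{(1+\alpha)z},\\
M_{02} &= (p + p')\,e^z,\\
M_{12} &= (\alpha q + q')\,e^{\alpha z}.
\end{align*}
Writing $z = x + iy$ and $\alpha = \beta + i\gamma$, one has $|f|^2 = |p|^2 e^{2x} + |q|^2 e^{2(\beta x - \gamma y)} + 1$. The key step is to apply the elementary inequality $(a+b+c)^2 \ge 2ab$ (and its cyclic analogues) to this three-term sum, yielding the lower bounds
\begin{equation*}
|f|^4 \ge 2|p|^2|q|^2\,e^{2((1+\beta)x - \gamma y)},\quad |f|^4 \ge 2|p|^2 e^{2x},\quad |f|^4 \ge 2|q|^2\,e^{2(\beta x - \gamma y)}.
\end{equation*}
The exponential factors on the right match exactly those of $|M_{01}|^2$, $|M_{02}|^2$, and $|M_{12}|^2$, so after cancellation each contribution becomes a rational function of $z$ whose limit at infinity is the finite constant $|1-\alpha|^2/2$, $1/2$, or $|\alpha|^2/2$ respectively. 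Summing gives a ratio that is bounded at infinity.

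The main, and essentially only, obstacle is that the three lower bounds above degenerate on the zero loci of $p$ and $q$, so the asymptotic bound holds only outside bounded neighborhoods of these zeros. This is handled for free by continuity of $\|\gamma'\|_{FS}^2$ on the complement. Once this is noted, the verification is precisely the direct computation the author omits.
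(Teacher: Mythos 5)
Your proof is correct: the Wronskian formula for the pulled-back Fubini--Study density, the degree count for $[z:p(z):1]$, and the $(a+b+c)^2\ge 2ab$ trick that cancels the exponentials exactly in each minor all check out, with the zeros of $p$ and $q$ correctly absorbed into a compact set where continuity gives boundedness. The paper omits this proof entirely as a ``straightforward computation,'' and your argument is precisely that computation carried out in full, so there is no divergence of approach to report.
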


However, not all holomorphic curves from $\CC$ to $\PP^2$ are Brody. The mapping $z\to[e^z: e^{iz^2}:1]$ is not Brody. For the verification, simply take $z=bi$ for real $b$ and let $b$ to $\infty$. Even if we require them to be injective, not all injective curves from $\CC$ to $\PP^2$ are Brody. The following gives us some examples of injective but non-Brody curves.

\begin{proposition}
The map $f_n:z\to(z, \exp(z^n))$ is not Brody in $\CC^2\subset\PP^2$ for $n\geq 3$. In particular, not all holomorphic images of $\CC$ in $\PP^2$ are Brody.
\end{proposition}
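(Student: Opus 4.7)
The plan is to work in homogeneous coordinates $[z : \exp(z^n) : 1]$ and exhibit a sequence of points on which the Fubini--Study norm of the derivative of $f_n$ blows up. Recall that for a holomorphic curve $\gamma = [\gamma_0 : \gamma_1 : \gamma_2]$ the Fubini--Study speed is
$$
\|\gamma'(z)\|_{FS}^{2} \;=\; \frac{\sum_{0\le i<j\le 2}\bigl|\gamma_i\gamma_j'-\gamma_j\gamma_i'\bigr|^{2}}{\bigl(|\gamma_0|^{2}+|\gamma_1|^{2}+|\gamma_2|^{2}\bigr)^{2}}.
$$
For $\gamma_0=z$, $\gamma_1=\exp(z^n)$, $\gamma_2=1$ one gets $\gamma_0\gamma_1'-\gamma_1\gamma_0' = (nz^{n}-1)\exp(z^n)$, $\gamma_1\gamma_2'-\gamma_2\gamma_1' = -nz^{n-1}\exp(z^n)$, and $\gamma_0\gamma_2'-\gamma_2\gamma_0'=-1$. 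Thus
$$
\|f_n'(z)\|_{FS}^{2} \;=\; \frac{|(nz^{n}-1)\exp(z^n)|^{2}+n^{2}|z|^{2(n-1)}|\exp(z^n)|^{2}+1}{\bigl(|z|^{2}+|\exp(z^n)|^{2}+1\bigr)^{2}}.
$$

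The obvious obstacle is that both numerator and denominator carry a factor like $|\exp(z^n)|^{2}=\exp(2\re z^n)$, which can be enormous and a priori cancel. The trick is to test along a ray where $z^n$ is \emph{purely imaginary}, so this factor is identically $1$. Concretely, set $z_r = r\,e^{\I\pi/(2n)}$ with $r>0$, so that $z_r^n=\I r^n$ and $|\exp(z_r^n)|=1$. Substituting:
$$
\|f_n'(z_r)\|_{FS}^{2}
\;=\;\frac{|n\I r^{n}-1|^{2}+n^{2}r^{2(n-1)}+1}{(r^{2}+2)^{2}}
\;\sim\; \frac{n^{2}r^{2n}}{r^{4}} \;=\; n^{2}\,r^{2n-4}
$$
as $r\to\infty$. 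Since $n\ge 3$, the exponent $2n-4\ge 2>0$, so the right-hand side tends to infinity. This contradicts any uniform bound on the Fubini--Study speed, establishing that $f_n$ is not Brody.

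For the final sentence of the proposition, note that $f_n$ itself is manifestly injective (its first coordinate is $z$), so in particular not every injective, hence not every, holomorphic map $\CC\to\PP^2$ is Brody. The only step requiring mild care is the direction choice $z=r\,e^{\I\pi/(2n)}$; once the exponential factor is neutralized the calculation is routine and the whole argument fits comfortably in the ``proof omitted / straightforward computation'' style already used in this section.
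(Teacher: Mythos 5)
Your computation is correct and is essentially the argument the paper intends: the paper omits this proof as a straightforward computation, but its hint for the neighboring example $z\mapsto[e^z:e^{iz^2}:1]$ (take $z=bi$ so the exponential factor has modulus one) is exactly your device of testing along a ray where $z^n$ is purely imaginary. The verification of the cross-term formula, the choice $z_r=re^{\I\pi/(2n)}$, and the resulting growth rate $n^2r^{2n-4}\to\infty$ for $n\ge 3$ are all correct, so nothing further is needed.
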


We close this section by pointing out a property of parametrizations of injective Brody curves.

\begin{proposition}
For an injective Brody curve $\cC$ in $\PP^2$, every parametrization of $\cC$ has uniformly bounded Fubini-Study metrics. In short, the injective Brodyness property does not depend on the choice of the parametrization.
\end{proposition}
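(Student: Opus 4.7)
The plan is to show that any two injective holomorphic parametrizations $\psi_1,\psi_2:\CC\to\PP^2$ of the same curve $\cC$ differ by an affine automorphism of $\CC$, so that a uniform Fubini-Study bound for $\psi_1$ automatically transfers to $\psi_2$ up to a multiplicative constant. Assume $\sup_{\theta\in\CC}\|\psi_1(\theta)\|_{FS,\theta}\le C_1$, and let $\psi_2$ be any other injective holomorphic parametrization of $\cC$.

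Set $\phi:=\psi_1^{-1}\circ\psi_2:\CC\to\CC$. By injectivity of both $\psi_i$ this is a well-defined bijection, and the main step is to promote $\phi$ to a biholomorphism of $\CC$. Let $E_i\subset\CC$ denote the (discrete) critical locus of $\psi_i$. On $\CC\setminus(E_2\cup\phi^{-1}(E_1))$ both $\psi_1$ and $\psi_2$ are local biholomorphisms onto the regular locus of $\cC$, so the inverse function theorem gives that $\phi$ is locally biholomorphic there. The delicate point is that an injective holomorphic map $\CC\to\PP^2$ need not be an immersion (consider the cusp $\theta\mapsto[\theta^2:\theta^3:1]$), so $E_1,E_2$ need not be empty. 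I plan to handle the critical set either via normalization --- both $\psi_i$ lift through the normalization $\nu:\widetilde\cC\to\cC$ to biholomorphisms $\CC\to\widetilde\cC$, and $\phi$ is the composition of their inverses --- or, more concretely, by checking directly that each $\psi_i$ is a homeomorphism onto its image and invoking Riemann's removable singularity theorem to extend $\phi$ across the discrete set $E_2\cup\phi^{-1}(E_1)$. Either route yields $\phi\in\aut{\CC}$.

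Every element of $\aut{\CC}$ is affine, so $\phi(\theta)=a\theta+b$ for some $a\in\CC\setminus\{0\}$ and $b\in\CC$. From $\psi_2=\psi_1\circ\phi$ and the chain rule,
$$
d\psi_2|_\theta\bigl(\fddtheta{}\bigr)\;=\;a\cdot d\psi_1|_{\phi(\theta)}\bigl(\fddtheta{}\bigr),
$$
and therefore
$$
\|\psi_2(\theta)\|_{FS,\theta}\;=\;|a|\,\|\psi_1(\phi(\theta))\|_{FS,\phi(\theta)}\;\le\;|a|\,C_1.
$$
Hence $\psi_2$ is Brody with constant $|a|\,C_1$. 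Since $\psi_1$ and $\psi_2$ were arbitrary injective parametrizations of $\cC$, the uniform Fubini-Study bound is independent of the choice of injective parametrization.

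The main obstacle is the first step --- showing $\phi\in\aut{\CC}$ --- where potential non-immersive behaviour of the parametrizations at cusp-type critical points must be ruled out, either via normalization or via continuity plus Riemann removal. Once $\phi$ is known to be a biholomorphism, the remainder is essentially a one-line chain-rule computation.
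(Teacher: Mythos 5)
Your proposal is correct and follows essentially the same route as the paper: the transition map between two parametrizations is an automorphism of $\CC$, hence affine $\theta\mapsto a\theta+b$, and the chain rule transfers the Fubini--Study bound up to the factor $|a|$. The paper's own proof is just this, stated in one line with the parametrizations taken to be biholomorphisms onto the leaf (so the cusp issue you carefully address never arises there); your extra discussion of non-immersive points is a reasonable precaution for general injective curves but is not needed in the paper's setting.
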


\begin{proof}
Let $\phi_1, \phi_2:\CC\to\cC$ be two biholomorphic parametrizations of $\cC$. The composition $\phi_2^{-1}\circ\phi_1:\CC\to\CC$ is a holomorphic automorphism of $\CC$. From a theorem of one complex variable, $\phi_2^{-1}\circ\phi_1(z)=\alpha z+\beta$ for constants $\alpha, \beta\in\CC$ with $\alpha\neq 0$.
\end{proof}

\section{Proof of Theorem \ref{thm:mainBrodyJ+}}
In this section, we prove the main theorem. Basically, we follow the Brody reparametrization lemma. We assume all the hypotheses in Theorem \ref{thm:mainBrodyJ+}.
\medskip

\noindent
\bf
Proof of Proposition \ref{thm:mainBrodyJ+}
\rm
We first define a family of analytic discs. From Corollary 6.13 in \cite{BS1991}, periodic points are dense in $J$. Pick a periodic point $P\in J$ and say $N$ its period. Let $\cL_P$ be a leaf in the foliation $\cF^+$ of $J^+$ passing through $P$ as discussed in Section \ref{sec:prelim}. Fix an analytic disc $\psi:\Delta\to\cL_P$ such that $\psi(0)=P$ and $\|\psi\|_{FS, 0}>0$. Then we consider a family of analytic discs as follows:
$$
\varphi_n:=f^{-Nn}\circ \psi:\Delta\to\cL_P.
$$
Then, since $\cL_P$ is a stable manifold of $P$, from the hyperbolicity of $f$, $\|\varphi_n\|_{FS, 0}\to\infty$ as $n\to\infty$.
\smallskip

Now we apply the Brody reparametrization lemma. Note that $\varphi_n$'s are holomorphic in a slightly larger disc. Define $H_n:\Delta\to\RR^+$ by $H_n(\theta):=\|\varphi_n\|_{FS, \theta}(1-|\theta|^2)$. Then, there exists $\theta_n\in\Delta$ such that $H_n(\theta_n)=\max_{\theta\in\Delta}H_n(\theta)$. For each $n$, define a M\"obius transformation $\mu_n(\zeta):=(\zeta+\theta_n)/(1+\overline{\theta_n}\zeta)$ mapping $0$ to $\theta_n$. Let $g_n:=\varphi_n\circ\mu_n$. Then
$$
\|g_n\|_{FS, \zeta}(1-|\zeta|^2)=\|\varphi_n\|_{FS, \theta}|\mu_n'(\zeta)|(1-|\zeta|^2)=\|\varphi_n\|_{FS, \theta}(1-|\theta|^2).
$$
So, $ \|g_n\|_{FS, \zeta}\leq \|g_n\|_{FS, 0}/(1-|\zeta|^2)$. Let $R_n=\|g_n\|_{FS, 0}$ and define $k_n(\theta)=g_n(\theta/R_n)$. Then,
$$
\|k_n\|_{FS, \theta}=\frac{\|g_n\|_{FS, \theta/R_n}}{R_n}\leq\frac{\|g_n\|_{FS, 0}}{R_n(1-|\theta/R_n|^2)}\leq 2,
$$
on $\Delta(0, R_n/2)$. Note that $\|k_n\|_{FS, 0}=1$ and that from the hyperbolicity of $f$, we see that $R_n\to\infty$ as $n\to\infty$. Hence, from a normal family argument and the compactness of $\PP^2$, we can find a non-trivial holomorphic map $\Phi:\CC\to \overline{J^+}$
such that $\|\Phi\|_{FS, 0}=1$. From Proposition \ref{prop:I+I-}, we have $\overline{K^+}=K^+\cup I_+$. However, Theorem \ref{thm:NonexistenceofHoloCurve} implies that $\Phi(\CC)\subset J^+$.
\smallskip

We prove that the Brody curve $\Phi(\CC)$ sits in a single leaf of the foliation of $J^+$. Suppose the contrary. Then, there exists two points $\alpha, \beta\in \CC$ such that $\Phi(\alpha)$ and $\Phi(\beta)$ live in two different leaves and the two $\alpha$, $\beta$ are sufficiently close so that some fraction of the complex curve $\Phi(\CC)$ connecting $\Phi(\alpha)$ and $\Phi(\beta)$ sits in a single flow-box of the foliation of $J^+$. Let $\gamma\subset\Phi(\CC)$ denote the complex curve connecting $\Phi(\alpha)$ and $\Phi(\beta)$. Then, there exists a constant $\epsilon>0$ such that for any plaque $T$ in the flow-box, $\sup_{(z, w)\in\gamma} \dist((z, w), T)>\epsilon$ where $\dist(\cdot, \cdot)$ is with respect to the standard Euclidean distance of $\CC^2$. This is a contradiction to the local uniform convergence since the image of each reparametrized analytic disc sits inside a leaf of the foliation $\cF^+$ of $J^+$.
\smallskip

We show that $\Phi$ is one-to-one. Suppose to the contrary that $\Phi$ is not one-to-one. Then, there are $\alpha, \beta\in\CC$ and $q\in\Phi(\CC)$ such that $\alpha\neq \beta$ and $\Phi(\alpha)=\Phi(\beta)=q$. Consider a sufficiently large $R_q> 1$ such that $\alpha, \beta\in\Delta(0, R_q)$. Let $F$ be a compact set of $\CC^2$ such that its interior contains $J$. Consider a finite covering of $J^+\cap F$ consisting of flow-boxes of $\cF^+$. Note that since $|a|\leq 1$, Theorem 5.9 in \cite{BS1991} says that for any leaf $\cL$ in $J^+$, there exists a point $x\in J$ such that $\cL$ is a stable manifold of the point $x$. Since $\Phi(\Delta(0, 2R_q))$ live in a single leaf, there exists sufficiently large $N_q\in\NN$ such that the analytic disc $f^{N_q}(\Phi(\Delta(0, 2R_q)))$ sits inside a flow-box in the finite covering. Again, since the image of each reparametrized analytic disc sits inside a leaf of the foliation of $J^+$, the injectivity of limit maps is implied by the local uniform convergence to $\Phi$ of a subsequence $\{k_{n_j}\}$ of injective maps, projection onto the base direction in the flow-box, and the Hurwicz theorem.
\smallskip

Since there is no proper biholomorphic image of $\CC$ inside $\CC$, the leaf containing the injective Brody curve itself is an injective Brody curve. This proves our theorem.
\hfill $\Box$

%% References

\end{document}